\documentclass[11pt]{article}
\usepackage[a4paper,margin=1in]{geometry}
\usepackage[T1]{fontenc}
\usepackage{lmodern}
\usepackage{microtype}
\usepackage{amsmath,amssymb,amsthm,mathtools}
\usepackage{enumitem}
\usepackage{bm}
\usepackage{xcolor}
\usepackage{hyperref}
\hypersetup{colorlinks=true,linkcolor=blue,citecolor=blue,urlcolor=blue}

\title{Rational arrival processes with strictly positive densities need not be Markovian}
\author{Oscar Peralta}
\date{}

\newtheorem{theorem}{Theorem}
\newtheorem{lemma}[theorem]{Lemma}
\newtheorem{proposition}[theorem]{Proposition}

\newtheorem{conjecture}[theorem]{Conjecture}

\theoremstyle{definition}

\theoremstyle{remark}

\newcommand{\e}{\mathrm{e}}

\newcommand{\bzero}{\bm{0}}
\newcommand{\R}{\mathbb{R}}
\newcommand{\Q}{\mathbb{Q}}

\begin{document}
\maketitle

\begin{abstract}
Telek~\cite{Telek2022} asked whether a rational arrival process (RAP), specified by matrices $\bm{G}_0$ and $\bm{G}_1$ and an initial row vector $\bm{\nu}$, with strictly positive joint densities and a unique dominant real eigenvalue of $\bm{G}_0$ must admit an equivalent Markovian arrival process (MAP). A counterexample of order $3$ is given, showing the answer is no, and that the conjecture fails even under the stronger condition of exact normalisation $(\bm{G}_0+\bm{G}_1)\bm{1}=\bm{0}$. The construction combines a strictly positive exponential baseline with a two-dimensional correction driven by an irrational rotation. Strict positivity of all joint densities follows from the continuous-time damping of the correction block; the obstruction to MAP realisability comes from the poles of the boundary generating function at $\e^{\pm i\varphi}$, which cannot be peripheral eigenvalues of any finite nonnegative matrix when $\varphi/\pi\notin\Q$.
\end{abstract}

\section{Introduction}

Phase-type (PH) distributions and matrix-exponential (ME) distributions describe the same kind of object, a distribution on $\R_+$ given by a vector--matrix pair $(\bm{\nu},\bm{G}_0)$, but with different admissibility conditions. A PH representation comes from a finite Markov chain and is therefore sign constrained. An ME representation keeps the same formula for the density,
\[
f(t)=-\bm{\nu} \e^{\bm{G}_0 t} \bm{G}_0\bm{1},
\]
but drops the Markovian interpretation; the question of giving such a representation a probabilistic interpretation goes back to Cox~\cite{cox1955use} and has been revisited more recently~\cite{peralta2023markov}.
The relation between the two classes has been studied for a long time. O'Cinneide's theorem shows that strict positivity together with a dominant-eigenvalue condition is enough to recover a PH representation from an ME one~\cite{OCinneide1990}. Later work made that picture more constructive~\cite{MocanuCommault1999,CommaultMocanu2003,HorvathTelek2015}.

The same distinction appears for point processes.
A finite-dimensional MAP is described by a triple $(\bm{\nu},\bm{G}_0,\bm{G}_1)$ with the usual Markovian sign constraints: $\bm{\nu}\ge\bzero$, $\bm{G}_1\ge\bzero$, the off-diagonal entries of $\bm{G}_0$ are nonnegative, and $\bm{G}_0\bm{1}\le\bzero$.
A RAP, introduced by Asmussen and Bladt~\cite{AsmussenBladt1999}, keeps the same joint-density formula,
\begin{equation}\label{eq:joint-density}
f_k(t_1,\dots,t_k)
\;=
\bm{\nu}\,\e^{\bm{G}_0 t_1}\bm{G}_1\,\e^{\bm{G}_0 t_2}\bm{G}_1\cdots \e^{\bm{G}_0 t_k}\bm{G}_1\bm{1},
\qquad t_1,\dots,t_k\ge 0,
\end{equation}
but allows arbitrary real entries as long as the resulting densities are nonnegative for every $k$.
The relationship between these two classes was surveyed in~\cite{asmussen2022ph}; RAPs have also been used as modelling tools in fluid queues~\cite{bean2022rap} and quasi-birth-and-death processes~\cite{bean2010quasi}.
Every MAP is therefore a RAP, but not every RAP is visibly Markovian.
The real question is whether suitable regularity assumptions force the hidden Markov structure back in.

Telek formulated exactly that question in~\cite{Telek2022}. The conjecture is the point-process analogue of the classical ME/PH characterisation: if the joint densities are strictly positive and if $\bm{G}_0$ has a unique real eigenvalue of maximal real part, does one automatically get a finite MAP representation? In the ME/PH setting the dominant-eigenvalue condition rules out the oscillatory behaviour that prevents a positive function from being PH. One might therefore expect the same mechanism to work here as well. The present note shows that it does not. More than that, it still does not work after one strengthens Telek's admissibility requirements to the normalised setting of a non-terminating point-process model.

The reason the ME/PH proof does not carry over is structural. In the one-matrix problem (that is, the characterisation of PH distributions), the density $f(t)=-\bm{\nu}\e^{\bm{G}_0t}\bm{G}_0\bm{1}$ involves only $\bm{G}_0$; the spectral hypotheses on $\bm{G}_0$ and the positivity of~$f$ together constrain the single matrix enough to force a Markovian realisation. In the two-matrix problem, the $k$-fold density~\eqref{eq:joint-density} involves iterated products $\e^{\bm{G}_0t_1}\bm{G}_1\e^{\bm{G}_0t_2}\bm{G}_1\cdots$, so the matrix $\bm{G}_1$ enters as an algebraically independent object. The hypotheses of Conjecture~\ref{conj:telek} constrain $\bm{G}_0$ directly (through the dominant-eigenvalue condition) and the pair $(\bm{G}_0,\bm{G}_1)$ jointly (through positivity of the densities), but they say almost nothing about $\bm{G}_1$ alone. The question is therefore whether positivity of all joint densities places enough indirect constraints on $\bm{G}_1$ to force nonnegativity. The answer turns out to be no: the continuous-time damping from $\bm{G}_0$ can mask bounded oscillatory behaviour in $\bm{G}_1$ that is fundamentally incompatible with any nonnegative realisation.

\begin{conjecture}[Telek~\cite{Telek2022}]\label{conj:telek}
Any RAP with strictly positive joint densities on $(0,\infty)^k$ for every $k\ge 1$ and with $\bm{G}_0$ having a unique real eigenvalue of maximal real part has a finite-dimensional MAP representation.
\end{conjecture}

Telek's note formulates the problem with the weaker condition
\[
\int_0^\infty\cdots \int_0^\infty f_k(t_1,\dots,t_k)
\,dt_1\cdots \,dt_k\le 1.
\]
For the present purpose it is more natural to impose the exact consistency condition
\begin{equation}\label{eq:marginal-consistency-intro}
\bm{G}_0\bm{1} + \bm{G}_1\bm{1} = \bzero,
\end{equation}
which is the analogue of the usual conservation-of-mass identity in a MAP. This condition is implicit in the finite-dimensional point-process framework of Asmussen and Bladt~\cite{AsmussenBladt1999}, though it is not stated explicitly there; we make it explicit here. Under~\eqref{eq:marginal-consistency-intro}, together with $\bm{\nu}\bm{1}=1$ and $\e^{\bm{G}_0t}\bm{1}\to\bzero$, each $f_k$ integrates to $1$ and the family $(f_k)_{k\ge 1}$ is projectively consistent under integration in the last variable. The example below is therefore not a defective or partially normalised object; it is a genuinely normalised RAP, and in particular it strengthens Telek's setup.

The counterexample we provide has order $3$. The first coordinate provides a strictly positive exponential baseline against which a signed correction can be added. The remaining two coordinates form the correction block, which decays faster in continuous time, with rate $2$. At each arrival, however, the correction block is multiplied by a rotation by angle $\varphi$. Because orthogonal rotations have bounded powers, the perturbation remains small and strict positivity is preserved. Choosing $\varphi/\pi$ irrational ensures that the successive rotation directions never repeat periodically, which is precisely what a finite nonnegative matrix cannot reproduce on its peripheral spectrum.

Our result is thus conceptually close to the PH-characterisation literature, but with the opposite conclusion. The classical results show that positivity together with the right spectral assumptions is enough to force a Markovian representation. Here the same philosophy breaks down one level higher: positivity of all joint densities and a simple dominant eigenvalue for $\bm{G}_0$ still do not prevent a non-Markovian oscillatory mode from hiding in the arrival mechanism.

The heuristic that leads to the counterexample can be described as follows. If a MAP representation $(\bm{\alpha},\bm{C}_0,\bm{C}_1)$ were to exist, the boundary values $a_k:=\bm{\nu}\bm{G}_1^k\bm{1}$ would have to equal $\bm{\alpha}\bm{C}_1^k\bm{1}$ for a nonnegative matrix $\bm{C}_1$. This is a scalar nonnegative realisation problem, and it carries a classical spectral obstruction: by the Perron--Frobenius theorem, every eigenvalue of a finite nonnegative matrix on its spectral circle must be a root of unity times the spectral radius. The simplest bounded oscillation that violates this constraint is a rotation by an irrational angle: the sequence $\e^{ij\varphi}$ with $\varphi/\pi\notin\Q$ is bounded (it lives on the unit circle) but never periodic, since by Weyl's equidistribution theorem its values are dense on the circle. The generating function of the resulting boundary sequence then has poles at $\e^{\pm i\varphi}$, which are not roots of unity. Once this obstruction is identified, the construction is almost forced: one needs a RAP whose matrix $\bm{G}_1$ encodes an irrational rotation in a $2\times2$ block, together with enough continuous-time damping in $\bm{G}_0$ to keep all joint densities strictly positive.

The rest of the paper is organised as follows.
Section~\ref{sec:construction} presents the counterexample in full and derives a closed form for the joint densities.
Section~\ref{sec:verification} verifies exact normalisation, strict positivity, and the dominant-eigenvalue condition.
Section~\ref{sec:boundary} studies the behaviour of the joint densities
at the origin and computes the generating function of the resulting sequence.
Section~\ref{sec:no-MAP} uses these results to prove that no finite-dimensional MAP representation can exist.

\section{The counterexample}\label{sec:construction}

Throughout, matrices and vectors are written in bold. We write $\bm{1}$ for the all-ones column vector of appropriate dimension, $\bm{1}_2=(1,1)^\top$ for the all-ones column vector in $\mathbb{R}^2$, $\bzero$ for the zero column vector of appropriate dimension, and $\bm{I}$, $\bm{I}_2$ for the identity matrix of appropriate and of size $2$, respectively.

The key idea is to encode inside the arrival matrix $\bm{G}_1$ a two-dimensional rotation acting on a correction part of the intensity vector; this produces the oscillatory terms in the boundary sequence. The rotation is introduced via the matrix
\begin{equation}\label{eq:irrational}
\varphi\in(0,\pi),
\qquad
\frac{\varphi}{\pi}\notin\mathbb{Q},
\end{equation}
and
\[
c=\cos\varphi,
\qquad
s=\sin\varphi,
\qquad
\bm{R}_\varphi=
\begin{pmatrix}
c & -s\\
s & c
\end{pmatrix}.
\]
At each arrival, the two correction coordinates of the intensity vector are transformed by $\bm{R}_\varphi$. After $j$ arrivals the correction component is multiplied by $\bm{R}_\varphi^j = \bm{R}_{j\varphi}$, so each arrival rotates the correction coordinates by one further angle $\varphi$. Since $\bm{R}_\varphi$ is orthogonal, its powers remain bounded. The condition $\varphi/\pi\notin\mathbb{Q}$ ensures that the successive directions never repeat: the sequence $\{\bm{R}_\varphi^j\}_{j\ge 0}$ is not eventually periodic.

Since $\bm{R}_\varphi$ has signed entries it cannot serve directly as a nonnegative arrival matrix. A third coordinate, the positive-intensity coordinate, is therefore added; its sole role is to contribute enough positive mass to keep the total arrival intensities nonnegative. Accordingly, we set up $\bm{G}_1$ in the block form
\begin{equation}\label{eq:G1-shape}
\bm{G}_1=
\begin{pmatrix}
b & \bzero^\top\\
\bm{u} & \bm{R}_\varphi
\end{pmatrix},
\qquad
\bm{u}=
\begin{pmatrix}
u_1\\
u_2
\end{pmatrix},
\end{equation}
where the bottom-right block $\bm{R}_\varphi$ acts on the correction coordinates, the scalar $b$ is the arrival rate of the positive-intensity coordinate, and the vector $\bm{u}$ encodes the coupling between that coordinate and the rotating block; all three are determined by the condition $(\bm{G}_0+\bm{G}_1)\bm{1}=\bzero$ below.

We take $\bm{G}_0$ to be diagonal, so that between arrivals each coordinate decays exponentially at its own rate with no further rotation or mixing. Choosing a faster decay rate for the correction coordinates keeps their contribution small:
\begin{equation}\label{eq:G0-choice}
\bm{G}_0=
\begin{pmatrix}
-1 & 0 & 0\\
0 & -2 & 0\\
0 & 0 & -2
\end{pmatrix}.
\end{equation}

The constants $b$, $u_1$, $u_2$ in~\eqref{eq:G1-shape} are now fixed by \eqref{eq:marginal-consistency-intro}, which gives
\begin{equation}\label{eq:u-def}
b=1,
\qquad
u_1=2-c+s,
\qquad
u_2=2-c-s.
\end{equation}
Therefore
\begin{equation}\label{eq:full-G1}
\bm{G}_1=
\begin{pmatrix}
1 & 0 & 0\\
2-c+s & c & -s\\
2-c-s & s & c
\end{pmatrix}.
\end{equation}

The initial distribution is
\begin{equation}\label{eq:full-nu}
\bm{\nu}=(1-\varepsilon,\,\varepsilon,\,0),
\end{equation}
where $\varepsilon\in(0,1)$ is a small parameter; its admissible range is determined in Proposition~\ref{prop:positivity}.

For the closed-form joint densities, write
\[
T_j=t_1+\cdots+t_j,
\qquad
T=T_k=t_1+\cdots+t_k.
\]
Because $\bm{G}_0$ is diagonal and $\bm{G}_1$ is block lower triangular, the product in~\eqref{eq:joint-density} can be computed explicitly.
\begin{lemma}\label{lem:density-formula}
For every $k\ge 1$ and every $t_1,\dots,t_k\ge 0$,
\begin{equation}\label{eq:density-formula}
f_k(t_1,\dots,t_k)
=
\e^{-T}
\left[
1-\varepsilon
+
\varepsilon\,(1,0)
\left(
\sum_{j=1}^{k-1}
\e^{-T_j}(2\bm{R}_\varphi^{j-1}-\bm{R}_\varphi^j)\bm{1}_2
+
2\e^{-T}\bm{R}_\varphi^{k-1}\bm{1}_2
\right)
\right],
\end{equation}
where $\bm{1}_2=(1,1)^\top$ is the all-ones vector in $\mathbb{R}^2$.
\end{lemma}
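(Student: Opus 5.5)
The plan is to evaluate the product in~\eqref{eq:joint-density} from the left, tracking the row vector
\[
\bm{x}_j:=\bm{\nu}\,\e^{\bm{G}_0 t_1}\bm{G}_1\cdots \e^{\bm{G}_0 t_j}\bm{G}_1 .
\]
Because $\bm{G}_0$ is diagonal, $\e^{\bm{G}_0 t}=\mathrm{diag}(\e^{-t},\e^{-2t},\e^{-2t})$. It scales the first coordinate by $\e^{-t}$ and the correction block $(x^{(2)},x^{(3)})$ by $\e^{-2t}$. A row vector $(p,\bm{q})$ with $p\in\R$ and $\bm{q}\in\R^{1\times 2}$, multiplied on the right by the block form~\eqref{eq:G1-shape}, becomes $(p+\bm{q}\bm{u},\,\bm{q}\bm{R}_\varphi)$.

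The correction block therefore evolves autonomously. Starting from $\bm{q}_0=(\varepsilon,0)=\varepsilon(1,0)$, induction on $j$ gives
\[
\bm{q}_j=\varepsilon\,\e^{-2T_j}(1,0)\bm{R}_\varphi^{j}.
\]
The first coordinate satisfies the recursion $p_j=\e^{-t_j}p_{j-1}+\bm{q}_{j-1}\e^{-2t_j}\bm{u}$. Equivalently,
\[
p_j=\e^{-t_j}p_{j-1}+\varepsilon\,\e^{-2T_j}(1,0)\bm{R}_\varphi^{j-1}\bm{u}.
\]
Unrolling this with $p_0=1-\varepsilon$ gives
\[
p_k=\e^{-T}(1-\varepsilon)+\varepsilon\sum_{j=1}^{k}\e^{-(T-T_j)}\e^{-2T_j}(1,0)\bm{R}_\varphi^{j-1}\bm{u}.
\]
Since $T-T_j+2T_j=T+T_j$, this simplifies to
\[
p_k=\e^{-T}\Bigl[1-\varepsilon+\varepsilon\sum_{j=1}^{k}\e^{-T_j}(1,0)\bm{R}_\varphi^{j-1}\bm{u}\Bigr].
\]
I would state the formula for $(p_j,\bm{q}_j)$ as the induction hypothesis and verify the inductive step directly.

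Next I compute $f_k=\bm{x}_k\bm{1}=p_k+\bm{q}_k\bm{1}_2$. The key identity is $\bm{u}=2\bm{1}_2-\bm{R}_\varphi\bm{1}_2$, which holds because $\bm{R}_\varphi\bm{1}_2=(c-s,\,s+c)^\top$; this is exactly the content of~\eqref{eq:u-def}. It gives $\bm{R}_\varphi^{j-1}\bm{u}=(2\bm{R}_\varphi^{j-1}-\bm{R}_\varphi^{j})\bm{1}_2$, which matches the summands for $j\le k-1$.

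It remains to handle the $j=k$ summand together with $\bm{q}_k\bm{1}_2$. Both carry the factor $\e^{-2T}$, since $T_k=T$. The $j=k$ term contributes $\varepsilon\,\e^{-2T}(1,0)(2\bm{R}_\varphi^{k-1}-\bm{R}_\varphi^{k})\bm{1}_2$, and $\bm{q}_k\bm{1}_2=\varepsilon\,\e^{-2T}(1,0)\bm{R}_\varphi^{k}\bm{1}_2$. The $\bm{R}_\varphi^k$ terms cancel, leaving $2\varepsilon\,\e^{-2T}(1,0)\bm{R}_\varphi^{k-1}\bm{1}_2$. Factoring out $\e^{-T}$, this is exactly the final term in~\eqref{eq:density-formula}.

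The only delicate points are bookkeeping. The first is keeping the exponents straight: $\e^{-(T-T_j)}$ from the first-coordinate decay after step $j$, combined with $\e^{-2T_j}$ from the block decay up to step $j$. The second is separating the boundary term $j=k$ and noticing its telescoping cancellation with $\bm{q}_k\bm{1}_2$. I would double-check the base case $k=1$ directly: the formula gives $\e^{-t_1}[1-\varepsilon+2\varepsilon\e^{-t_1}]$. On the other side, $\bm{\nu}\e^{\bm{G}_0t_1}\bm{G}_1\bm{1}=-\bm{\nu}\e^{\bm{G}_0t_1}\bm{G}_0\bm{1}=(1-\varepsilon)\e^{-t_1}+2\varepsilon\e^{-2t_1}$, using $\bm{G}_1\bm{1}=-\bm{G}_0\bm{1}$ from~\eqref{eq:marginal-consistency-intro}. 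The two expressions agree.
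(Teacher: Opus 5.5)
Your proof is correct and is essentially the paper's computation: both unroll the product explicitly using the diagonal $\bm{G}_0$, the block lower-triangular $\bm{G}_1$, and the identity $\bm{u}=(2\bm{I}_2-\bm{R}_\varphi)\bm{1}_2$. The only difference is the direction of multiplication. The paper works from the right, so the first coordinate is the autonomous one and the boundary term $2\e^{-T}\bm{R}_\varphi^{k-1}\bm{1}_2$ comes directly from $\bm{G}_1\bm{1}=(1,2,2)^\top$. Your left-to-right recursion instead makes the correction block autonomous and recovers that term through the cancellation of the $\bm{R}_\varphi^{k}$ pieces.
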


\begin{proof}
Throughout this proof, fix ${k_0}\ge 1$. For $i=1,\dots,{k_0}$, set $\bm{M}_i=\e^{\bm{G}_0t_i}\bm{G}_1$, so that
\[
f_{k_0}(t_1,\dots,t_{k_0})=\bm{\nu}\bm{M}_1\cdots \bm{M}_{k_0}\bm{1}.
\]
For $j=1,\dots,{k_0}$, write
\[
\bm{M}_j\cdots \bm{M}_{k_0}\bm{1}
=
\begin{pmatrix}
\e^{-(T-T_{j-1})}\\
\bm{v}_j
\end{pmatrix},
\qquad T_0:=0,
\]
with $\bm{v}_j\in\mathbb{R}^2$. Using the block forms
\[
\bm{G}_1=
\begin{pmatrix}
1 & \bzero^\top\\
\bm{u} & \bm{R}_\varphi
\end{pmatrix},
\qquad
\bm{u}=(2\bm{I}_2-\bm{R}_\varphi)\bm{1}_2,\qquad\mbox{and}\qquad
\e^{\bm{G}_0t_j}
=
\begin{pmatrix}
\e^{-t_j} & \bzero^\top\\
\bzero & \e^{-2t_j}\bm{I}_2
\end{pmatrix},
\]
we obtain
\[
\bm{M}_j
=
\begin{pmatrix}
\e^{-t_j} & \bzero^\top\\
\e^{-2t_j}\bm{u} & \e^{-2t_j}\bm{R}_\varphi
\end{pmatrix}.
\]
Hence
\[
\bm{v}_j
=
\e^{-2t_j}\bm{u}\,\e^{-(T-T_j)}
+
\e^{-2t_j}\bm{R}_\varphi\,\bm{v}_{j+1},
\qquad j=1,\dots,{k_0}-1.
\]
Also, since $\bm{G}_1\bm{1}=-\bm{G}_0\bm{1}= (1, 2, 2)^\intercal$, we have
\[
\bm{M}_{k_0}\bm{1}
=
\e^{\bm{G}_0t_{k_0}}\bm{G}_1\bm{1}
=
\begin{pmatrix}
\e^{-t_{k_0}}\\
2\e^{-2t_{k_0}}\bm{1}_2
\end{pmatrix},
\]
and therefore
\[
\bm{v}_{k_0}=2\e^{-2t_{k_0}}\bm{1}_2.
\]

Iterating the recursion gives
\[
\bm{v}_1
=
\sum_{j=1}^{{k_0}-1}
\e^{-2T_j}\bm{R}_\varphi^{j-1}\bm{u}\,\e^{-(T-T_j)}
+
2\e^{-2T}\bm{R}_\varphi^{{k_0}-1}\bm{1}_2.
\]
Substituting $\bm{u}=(2\bm{I}_2-\bm{R}_\varphi)\bm{1}_2$ and factoring out $\e^{-T}$ yields
\[
\bm{v}_1
=
\e^{-T}
\left(
\sum_{j=1}^{{k_0}-1}
\e^{-T_j}(2\bm{R}_\varphi^{j-1}-\bm{R}_\varphi^j)\bm{1}_2
+
2\e^{-T}\bm{R}_\varphi^{{k_0}-1}\bm{1}_2
\right).
\]
Finally, since
\[
\bm{\nu}=(1-\varepsilon,\varepsilon,0),
\]
we have
\[
f_{k_0}
=
(1-\varepsilon)\e^{-T}
+
\varepsilon\,(1,0)\bm{v}_1,
\]
which is exactly \eqref{eq:density-formula}.
\end{proof}

Formula~\eqref{eq:density-formula} separates the density into a positive leading term $(1-\varepsilon)\e^{-T}$ and a signed correction. In the correction, each summand carries an extra factor \(\e^{-T_j}\), so although the oscillation in \(\bm{R}_\varphi^j\) does not decay with \(j\), its contribution is damped by the elapsed time.

\section{Normalisation, positivity, and the spectral condition}\label{sec:verification}

We verify the three properties that qualify the example as a counterexample to Conjecture~\ref{conj:telek}.

\begin{proposition}\label{prop:normalisation}
For every $k\ge 2$,
\begin{equation}\label{eq:one-step-normalisation}
\int_0^{\infty} f_k(t_1,\dots,t_k)\,dt_k
=
 f_{k-1}(t_1,\dots,t_{k-1}),
\end{equation}
and
\begin{equation}\label{eq:full-normalisation}
\int_0^\infty\cdots \int_0^\infty f_k(t_1,\dots,t_k)
\,dt_1\cdots \,dt_k=1
\qquad\text{for all }k\ge 1.
\end{equation}
\end{proposition}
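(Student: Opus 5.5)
The plan is to work at the matrix level rather than with the closed form of Lemma~\ref{lem:density-formula}. The key identity is
\[
\int_0^\infty \e^{\bm{G}_0 t}\,dt=(-\bm{G}_0)^{-1}=\mathrm{diag}(1,\tfrac12,\tfrac12).
\]
The integral converges because $\bm{G}_0$ is diagonal with strictly negative entries. Writing $f_k=\bm{\nu}\,\e^{\bm{G}_0 t_1}\bm{G}_1\cdots \e^{\bm{G}_0 t_{k-1}}\bm{G}_1\,\e^{\bm{G}_0 t_k}\bm{G}_1\bm{1}$, only the last factor depends on $t_k$. Fubini is harmless here: the integrand is a finite sum of terms of the form (constant)$\times$(bounded function of $t_1,\dots,t_{k-1}$)$\times\e^{-t_k}$ or $\e^{-2t_k}$, so it is absolutely integrable in $t_k$.

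For \eqref{eq:one-step-normalisation}, integrating in $t_k$ gives
\[
\int_0^\infty \e^{\bm{G}_0 t_k}\bm{G}_1\bm{1}\,dt_k=(-\bm{G}_0)^{-1}\bm{G}_1\bm{1}.
\]
By the consistency condition \eqref{eq:marginal-consistency-intro}, $\bm{G}_1\bm{1}=-\bm{G}_0\bm{1}$, so this vector equals $(-\bm{G}_0)^{-1}(-\bm{G}_0)\bm{1}=\bm{1}$. One can also check this directly: $\bm{G}_1\bm{1}=(1,2,2)^\top$ was already noted in the proof of Lemma~\ref{lem:density-formula}, and $\mathrm{diag}(1,\frac12,\frac12)(1,2,2)^\top=\bm{1}$. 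Substituting back, the remaining product is $\bm{\nu}\,\e^{\bm{G}_0 t_1}\bm{G}_1\cdots \e^{\bm{G}_0 t_{k-1}}\bm{G}_1\bm{1}=f_{k-1}(t_1,\dots,t_{k-1})$, which is the claim.

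For \eqref{eq:full-normalisation}, I would argue by induction on $k$. The base case is $k=1$, where $\int_0^\infty f_1(t_1)\,dt_1=\bm{\nu}(-\bm{G}_0)^{-1}\bm{G}_1\bm{1}=\bm{\nu}\bm{1}=(1-\varepsilon)+\varepsilon+0=1$. For the inductive step, integrate $f_k$ first in $t_k$ using \eqref{eq:one-step-normalisation}, and then apply the induction hypothesis to $f_{k-1}$. Fubini--Tonelli justifies the iterated integration because the integrand is a finite combination of exponentials in the $t_i$, each absolutely integrable on $(0,\infty)^k$. Nonnegativity of $f_k$ is therefore not needed at this stage, which matters since positivity is only established later, in Proposition~\ref{prop:positivity}.

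There is no real obstacle; the only point needing care is the integrability justification. An alternative check is to integrate the closed form \eqref{eq:density-formula} in $t_k$ directly. Since $T=T_{k-1}+t_k$, this gives $\e^{-T_{k-1}}$ from the leading term and $\frac23\e^{-3T_{k-1}}$-type terms from the correction. These should recombine via $2\bm{R}_\varphi^{k-2}\bm{1}_2-\bm{R}_\varphi^{k-1}\bm{1}_2+\bm{R}_\varphi^{k-1}\bm{1}_2$ into the $(k-1)$ formula. The matrix argument avoids this bookkeeping, so I would use it.
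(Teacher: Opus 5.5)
Your proposal is correct and follows essentially the same route as the paper: integrate only the last factor, use $(\bm{G}_0+\bm{G}_1)\bm{1}=\bzero$ to get $\int_0^\infty \e^{\bm{G}_0 t}\bm{G}_1\bm{1}\,dt=\bm{1}$, and iterate down to $\bm{\nu}\bm{1}=1$. You evaluate that integral via $(-\bm{G}_0)^{-1}$ where the paper writes $-[\e^{\bm{G}_0 t}\bm{1}]_0^\infty$, which is the same step, and your explicit Fubini/integrability remark is a harmless addition that the paper leaves implicit.
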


\begin{proof}
Since $(\bm{G}_0+\bm{G}_1)\bm{1}=\bzero$, we have
\[
\int_0^\infty \e^{\bm{G}_0t}\bm{G}_1\bm{1}\,dt
=
-\int_0^\infty \e^{\bm{G}_0t}\bm{G}_0\bm{1}\,dt
\;=\; -\left[\e^{\bm{G}_0t}\bm{1}\right]_0^\infty
=
\bm{1}.
\]
Substituting this into the last factor of \eqref{eq:joint-density} gives \eqref{eq:one-step-normalisation}. Iterating \eqref{eq:one-step-normalisation} reduces \eqref{eq:full-normalisation} to the case $k=1$, which gives $\int_0^\infty f_1(t_1)\,dt_1=\bm{\nu}\bm{1}=1$.
\end{proof}

\begin{proposition}\label{prop:positivity}
There is a constant $M=M(\varphi)$ such that, whenever
\[
0<\varepsilon<\frac{1}{M+1},
\]
we have
\[
f_k(t_1,\dots,t_k)>0
\qquad\text{for every }k\ge 1\text{ and every }t_1,\dots,t_k\ge 0.
\]
\end{proposition}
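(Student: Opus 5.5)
We start from the closed form in Lemma~\ref{lem:density-formula}. Since $\e^{-T}>0$, it is enough to show that the bracket
\[
B_k:=1-\varepsilon+\varepsilon\,(1,0)\left(\sum_{j=1}^{k-1}\e^{-T_j}(2\bm{R}_\varphi^{j-1}-\bm{R}_\varphi^j)\bm{1}_2+2\e^{-T}\bm{R}_\varphi^{k-1}\bm{1}_2\right)
\]
is strictly positive. Our aim is a lower bound $B_k\ge 1-\varepsilon-\varepsilon M$ with $M$ depending only on $\varphi$ and not on $k$ or on the $t_i$. Once we have it, the condition $\varepsilon<1/(M+1)$ gives $B_k>0$ at once.

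\textbf{Bounding the correction term.} We use two facts.
\begin{itemize}
\item Since $\bm{R}_\varphi$ is orthogonal, every power satisfies $|(1,0)\bm{R}_\varphi^{m}\bm{1}_2|\le \|\bm{R}_\varphi^m\bm{1}_2\|_2=\sqrt2$. Hence each summand has $|(1,0)(2\bm{R}_\varphi^{j-1}-\bm{R}_\varphi^j)\bm{1}_2|\le 3\sqrt2$, and the last term is bounded by $2\sqrt2$.
\item The decay factors $\e^{-T_j}$ are not summable uniformly in the $t_i$: if all $t_i=0$, every $T_j$ vanishes and the crude bound grows like $k$.
\end{itemize}
So a termwise absolute bound fails. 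This is the main obstacle.

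\textbf{Handling the obstacle by telescoping.} We regroup the sum using $\e^{-T_j}$ nonincreasing in $j$, i.e.\ summation by parts. Write $\bm{w}_m:=\bm{R}_\varphi^m\bm{1}_2$. Then
\[
\sum_{j=1}^{k-1}\e^{-T_j}(2\bm{w}_{j-1}-\bm{w}_j)=\sum_{j=1}^{k-1}\e^{-T_j}\bm{w}_{j-1}+\sum_{j=1}^{k-1}\e^{-T_j}(\bm{w}_{j-1}-\bm{w}_j).
\]
\begin{itemize}
\item The difference part telescopes after Abel summation. Its magnitude is at most $\e^{-T_1}|\bm{w}_0|$ plus $\sum_j(\e^{-T_j}-\e^{-T_{j+1}})\sup_m|\bm{w}_m|$ plus the boundary term. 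This is bounded by a constant multiple of $\sqrt2$, because the weights $\e^{-T_j}-\e^{-T_{j+1}}$ are nonnegative and sum to at most $1$.
\item The part $\sum_j \e^{-T_j}\bm{w}_{j-1}$ is the real difficulty. We need uniform bounds on the partial sums $\sum_{j\le n}\bm{w}_{j-1}=\sum_{j<n}\bm{R}_\varphi^j\bm{1}_2=(\bm{I}_2-\bm{R}_\varphi^{n})(\bm{I}_2-\bm{R}_\varphi)^{-1}\bm{1}_2$. These exist precisely because $\varphi\in(0,\pi)$ makes $\bm{I}_2-\bm{R}_\varphi$ invertible. The norm of $(\bm{I}_2-\bm{R}_\varphi)^{-1}$ is $1/(2\sin(\varphi/2))$, so the partial sums are bounded by $\sqrt2/\sin(\varphi/2)$.
\item A second Abel summation against the nonincreasing nonnegative weights $\e^{-T_j}\le 1$ then bounds this part by $\sqrt2/\sin(\varphi/2)$ as well.
\end{itemize}

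\textbf{Conclusion.} Collecting these pieces, together with the $2\sqrt2$ bound on the final term $2\e^{-T}\bm{w}_{k-1}$, gives an explicit constant of the form $M(\varphi)=C_1+C_2/\sin(\varphi/2)$ with absolute constants $C_1,C_2$. The correction term is therefore bounded below by $-\varepsilon M$ for all $k$ and all $t_i\ge0$. This yields $B_k\ge 1-\varepsilon(1+M)>0$ and proves Proposition~\ref{prop:positivity}, including at boundary points where some $t_i=0$.

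The key step, and the one we expect to be delicate, is the uniform boundedness of the partial sums of $\bm{R}_\varphi^j\bm{1}_2$. Here $\varphi\neq0$ is used, and irrationality plays no role in this step. Getting the constant carefully through the Abel summations is routine.
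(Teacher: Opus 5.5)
Your proof is correct and follows essentially the paper's route. Both arguments use Abel summation against the nonincreasing weights $\e^{-T_j}$, together with the uniform bound on the partial sums of $\bm{R}_\varphi^j\bm{1}_2$ that comes from $|1-\e^{i\varphi}|=2\sin(\varphi/2)>0$. The only difference is bookkeeping: the paper computes the partial sums $W_m$ of the full coefficient sequence (including the final $2\bm{R}_\varphi^{k-1}\bm{1}_2$ term) in closed form and does one summation by parts. You instead split off the difference part and the last term and bound them separately, which gives a slightly larger but equally valid constant, such as $M=4\sqrt2+\sqrt2/\sin(\varphi/2)$.
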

\begin{proof}
Fix $k_0\ge 1$. By \eqref{eq:density-formula},
\[
f_{k_0}(t_1,\dots,t_{k_0})
=
\e^{-T}
\left[
1-\varepsilon
+
\varepsilon
\left(
\sum_{j=1}^{{k_0}-1}
\e^{-T_j}(1,0)(2\bm{R}_\varphi^{j-1}-\bm{R}_\varphi^j)\bm{1}_2
+
2\e^{-T}(1,0)\bm{R}_\varphi^{{k_0}-1}\bm{1}_2
\right)
\right].
\]
Set
\[
w_j:=(1,0)(2\bm{R}_\varphi^{j-1}-\bm{R}_\varphi^j)\bm{1}_2
\quad (1\le j\le {k_0}-1),
\qquad
w_{k_0}:=2(1,0)\bm{R}_\varphi^{{k_0}-1}\bm{1}_2,
\]
and let
\[
W_m:=\sum_{j=1}^m w_j.
\]
A direct telescoping computation gives
\[
W_m
\;=\; (1,0)\left( 2\sum_{j=0}^{m-1}\bm{R}_\varphi^j - \sum_{j=1}^{m}\bm{R}_\varphi^j \right)\bm{1}_2
=
(1,0)\left(\bm{I}_2-\bm{R}_\varphi^m+\sum_{\ell=0}^{m-1}\bm{R}_\varphi^\ell\right)\bm{1}_2,
\qquad 1\le m\le {k_0}-1,
\]
and
\[
W_{k_0}
\;=\; W_{k_0-1} + 2(1,0)\bm{R}_\varphi^{k_0-1}\bm{1}_2
=
(1,0)\left(\bm{I}_2+\sum_{\ell=0}^{{k_0}-1}\bm{R}_\varphi^\ell\right)\bm{1}_2.
\]

Identify $\mathbb{R}^2$ with $\mathbb{C}$ via $(x,y)\leftrightarrow x+iy$, so that $\bm{R}_\varphi$ acts as multiplication by $e^{i\varphi}$. Any real polynomial $\bm{A}$ in $\bm{R}_\varphi$ corresponds to a complex scalar $z_{\bm{A}}$, and $(1,0)\bm{A}\bm{1}_2 = \operatorname{Re}(z_{\bm{A}}(1+i))$, so $|(1,0)\bm{A}\bm{1}_2|\le\sqrt{2}|z_{\bm{A}}|$. Specifically, for
\[
\bm{A}_m := \bm{I}_2-\bm{R}_\varphi^m+\sum_{\ell=0}^{m-1}\bm{R}_\varphi^\ell,
\quad\mbox{we have}\quad
z_{\bm{A}_m} = 1-e^{im\varphi}+\sum_{\ell=0}^{m-1}e^{i\ell\varphi} = 1-e^{im\varphi}+\frac{1-e^{im\varphi}}{1-e^{i\varphi}}.
\]
Since \(\varphi\in(0,\pi)\), we have \(e^{i\varphi}\neq 1\), so the denominator \(|1-e^{i\varphi}|=2\sin(\varphi/2)\) is strictly positive (dropping the absolute value since \(\sin(\varphi/2)>0\)). By the triangle inequality,
\[
|z_{\bm{A}_m}|
\le
|1-e^{im\varphi}|+\frac{|1-e^{im\varphi}|}{|1-e^{i\varphi}|}
\le
2+\frac{2}{2\sin(\varphi/2)}
=
2+\frac{1}{\sin(\varphi/2)}.
\]
Therefore,
\[
|W_m| \le \sqrt{2}|z_{\bm{A}_m}| \le \sqrt{2}\left(2+\frac{1}{\sin(\varphi/2)}\right).
\]
The same logic applies to \(W_{k_0}\). Letting \(\bm{A}_{k_0} := \bm{I}_2+\sum_{\ell=0}^{k_0-1}\bm{R}_\varphi^\ell\), its complex counterpart \(z_{\bm{A}_{k_0}}\) gives
\[
|W_{k_0}|
\le
\sqrt{2}|z_{\bm{A}_{k_0}}|
=
\sqrt{2}\left|1+\frac{1-e^{ik_0\varphi}}{1-e^{i\varphi}}\right|
\le
\sqrt{2}\left(1+\frac{1}{\sin(\varphi/2)}\right).
\]
Setting \(M:=\sqrt{2}\left(2+\frac{1}{\sin(\varphi/2)}\right)\), hence
\[
|W_m|\le M
\qquad\text{for all }1\le m\le k_0.
\]

Now write
\[
\sum_{j=1}^{k_0} e^{-T_j}w_j
=
\sum_{j=1}^{{k_0}-1}W_j\bigl(e^{-T_j}-e^{-T_{j+1}}\bigr)+W_{k_0}e^{-T_{k_0}}.
\]
Since \(0\le e^{-T_{k_0}}\le\cdots\le e^{-T_1}\le1\), it follows that
\[
\left|\sum_{j=1}^{k_0} e^{-T_j}w_j\right|
\le
M\sum_{j=1}^{{k_0}-1}(e^{-T_j}-e^{-T_{j+1}})+Me^{-T_{k_0}}
=
Me^{-T_1}
\le M.
\]
Therefore
\[
f_{k_0}(t_1,\dots,t_{k_0})\ge e^{-T}\bigl(1-\varepsilon-\varepsilon M\bigr),
\]
which is strictly positive as soon as \(\varepsilon<1/(M+1)\).
\end{proof}

In the remainder of the paper, fix once and for all an $\varepsilon$ satisfying $0<\varepsilon<1/(M+1)$. With this choice, the triple $(\bm{\nu},\bm{G}_0,\bm{G}_1)$ is indeed a RAP with strictly positive joint densities.

\begin{proposition}\label{prop:dominant}
The matrix $\bm{G}_0$ has a unique real eigenvalue of maximal real part, namely $-1$.
\end{proposition}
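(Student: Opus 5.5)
Since $\bm{G}_0$ in \eqref{eq:G0-choice} is diagonal, the plan is to read its spectrum directly off the diagonal and then check the statement against it. The characteristic polynomial factors as
\[
\det(\lambda\bm{I}-\bm{G}_0)=(\lambda+1)(\lambda+2)^2,
\]
so the eigenvalues are $-1$, which is simple, and $-2$, which has algebraic multiplicity two. All of them are real.

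Next, I would compare real parts. Since $-1>-2$, the maximal real part of the spectrum is $-1$. It is attained only by the eigenvalue $-1$, and because every eigenvalue is real, no complex pair shares this real part.

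I would also note that $-1$ is algebraically simple, because it occurs exactly once on the diagonal. This gives the strongest form of the condition in Conjecture~\ref{conj:telek}: the dominant eigenvalue is unique, real, and simple.

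No step here is a real obstacle. The only point needing care is reading ``unique'' correctly: it refers to uniqueness among the eigenvalues of maximal real part, and the multiplicity of the eigenvalue $-2$ has no bearing on it.
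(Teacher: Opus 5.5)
Your proposal is correct and follows the paper's own proof. Both read the spectrum $\{-1,-2,-2\}$ off the diagonal of $\bm{G}_0$ and observe that $-1$ is simple and strictly dominates the other eigenvalues in real part.
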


\begin{proof}
The spectrum of $\bm{G}_0$ is $\{-1,-2,-2\}$.
So $-1$ is simple and strictly dominates the other eigenvalues in real part.
\end{proof}

\section{Evaluation at the origin and its generating function}\label{sec:boundary}

Setting $t_1=\cdots=t_k=0$ in~\eqref{eq:joint-density} removes every factor $\e^{\bm{G}_0t_i}$ and leaves behind iterated powers of $\bm{G}_1$ alone; define
\begin{equation}\label{eq:shadow}
a_k:=\bm{\nu}\bm{G}_1^k\bm{1},
\qquad k\ge 1.
\end{equation}
If a MAP representation $(\bm{\alpha},\bm{C}_0,\bm{C}_1)$ exists, then $a_k=\bm{\alpha}\bm{C}_1^k\bm{1}$ for a nonnegative matrix $\bm{C}_1$, so the question of MAP realisability reduces, at the level of these values, to a nonnegative realisation problem for a scalar sequence, which is a classical problem with known spectral obstructions.

\begin{lemma}\label{lem:shadow}
For every $k\ge 1$,
\begin{equation}\label{eq:shadow-formula}
a_k
=
1+\varepsilon(1,0)\sum_{j=0}^{k-1}\bm{R}_\varphi^j\bm{1}_2.
\end{equation}
In particular, $(a_k)_{k\ge 1}$ is positive and bounded.
\end{lemma}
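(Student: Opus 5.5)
The plan is to obtain \eqref{eq:shadow-formula} directly from Lemma~\ref{lem:density-formula} by setting $t_1=\cdots=t_k=0$. This is legitimate because \eqref{eq:density-formula} holds for all $t_i\ge 0$, and at the origin \eqref{eq:joint-density} reduces to $\bm{\nu}\bm{G}_1^k\bm{1}=a_k$. With $T_j=T=0$ every exponential factor equals $1$, so
\[
a_k = 1-\varepsilon+\varepsilon(1,0)\Bigl(\sum_{j=1}^{k-1}(2\bm{R}_\varphi^{j-1}-\bm{R}_\varphi^{j})\bm{1}_2+2\bm{R}_\varphi^{k-1}\bm{1}_2\Bigr).
\]
The bracket is exactly $W_k$ from the proof of Proposition~\ref{prop:positivity}, with $k_0=k$. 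That proof already computed it by telescoping as $(1,0)\bigl(\bm{I}_2+\sum_{\ell=0}^{k-1}\bm{R}_\varphi^\ell\bigr)\bm{1}_2$. Since $(1,0)\bm{I}_2\bm{1}_2=1$, the term $\varepsilon\cdot 1$ cancels the $-\varepsilon$, and we are left with \eqref{eq:shadow-formula}.

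As a cross-check that avoids the density formula, I would also compute $\bm{G}_1^k\bm{1}$ by induction using the block structure. The first coordinate is $1$. The lower block satisfies $\bm{x}_{k}=\bm{u}+\bm{R}_\varphi\bm{x}_{k-1}$ with $\bm{x}_1=\bm{u}+\bm{R}_\varphi\bm{1}_2=2\bm{1}_2$. Using $\bm{u}=(2\bm{I}_2-\bm{R}_\varphi)\bm{1}_2$, one verifies by induction that
\[
\bm{x}_k=\bm{1}_2+\sum_{j=0}^{k-1}\bm{R}_\varphi^j\bm{1}_2 .
\]
The inductive step is $\bm{u}+\bm{R}_\varphi\bm{1}_2+\sum_{j=1}^{k}\bm{R}_\varphi^j\bm{1}_2=2\bm{1}_2+\sum_{j=1}^{k}\bm{R}_\varphi^j\bm{1}_2$, which has the required form. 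Then $a_k=(1-\varepsilon)+\varepsilon(1,0)\bm{x}_k$ gives the same expression.

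For the final assertion, I would reuse the complex identification from the proof of Proposition~\ref{prop:positivity}. The correction term $(1,0)\sum_{j=0}^{k-1}\bm{R}_\varphi^j\bm{1}_2$ equals $\operatorname{Re}\bigl((1+i)\tfrac{1-e^{ik\varphi}}{1-e^{i\varphi}}\bigr)$, so in absolute value it is at most $\sqrt2/\sin(\varphi/2)$, which is less than $M$. Hence
\[
\lvert a_k-1\rvert<\varepsilon M<1 \quad\text{since } \varepsilon<1/(M+1).
\]
This shows that $a_k$ is bounded (it lies in $(1-\varepsilon M,\,1+\varepsilon M)$) and strictly positive. Positivity also follows directly, since $a_k=f_k(0,\dots,0)>0$ by Proposition~\ref{prop:positivity}.

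No step is a real obstacle. The only care needed is the bookkeeping at the endpoints: the $j=k$ term carries a factor $2$, and the identity term must cancel the $-\varepsilon$. The induction is the safest way to confirm both.
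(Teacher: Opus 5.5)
Your proposal is correct and follows essentially the same route as the paper: evaluate \eqref{eq:density-formula} at $t_1=\cdots=t_k=0$, telescope the bracket to $\bm{I}_2+\sum_{j=0}^{k-1}\bm{R}_\varphi^j$ so that the identity term cancels the $-\varepsilon$, obtain positivity from $a_k=f_k(0,\dots,0)>0$, and bound the correction by $\sqrt2/\sin(\varphi/2)$ via the identification of $\mathbb{R}^2$ with $\mathbb{C}$. Your two extras are also valid: the direct induction on $\bm{G}_1^k\bm{1}$ checks the formula without using Lemma~\ref{lem:density-formula}, and the estimate $|a_k-1|<\varepsilon M<1$ gives positivity without invoking the conclusion of Proposition~\ref{prop:positivity}.
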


\begin{proof}
Set \(t_1=\cdots=t_k=0\) in \eqref{eq:density-formula}. Then \(T_j=0\) for all \(j\), and therefore
\[
a_k
=
1-\varepsilon
+
\varepsilon(1,0)\left(
\sum_{j=1}^{k-1}(2\bm{R}_\varphi^{j-1}-\bm{R}_\varphi^j)\bm{1}_2
+
2\bm{R}_\varphi^{k-1}\bm{1}_2
\right).
\]
The sum telescopes:
\[
\sum_{j=1}^{k-1}(2\bm{R}_\varphi^{j-1}-\bm{R}_\varphi^j)+2\bm{R}_\varphi^{k-1}
=
\bm{I}_2+\sum_{j=0}^{k-1}\bm{R}_\varphi^j.
\]
Hence
\[
a_k
=
1-\varepsilon
+
\varepsilon(1,0)\left(\bm{I}_2+\sum_{j=0}^{k-1}\bm{R}_\varphi^j\right)\bm{1}_2
=
1+\varepsilon(1,0)\sum_{j=0}^{k-1}\bm{R}_\varphi^j\bm{1}_2,
\]
which is \eqref{eq:shadow-formula}. The sequence is positive because $a_k = f_k(0,\dots,0) > 0$ by Proposition~\ref{prop:positivity}. For boundedness, note that under the identification of $\mathbb{R}^2$ with $\mathbb{C}$,
\[
(1,0)\sum_{j=0}^{k-1}\bm{R}_\varphi^j\bm{1}_2
\quad\mbox{is identified with}\quad
\operatorname{Re}\sum_{j=0}^{k-1}e^{ij\varphi}(1+i)
=
\operatorname{Re}\left(\frac{1-e^{ik\varphi}}{1-e^{i\varphi}}(1+i)\right),
\]
whose modulus is bounded by
\[
\left|\frac{1-e^{ik\varphi}}{1-e^{i\varphi}}(1+i)\right|
=
\frac{|1-e^{ik\varphi}|}{|1-e^{i\varphi}|}\sqrt{2}
\le
\frac{2}{2\sin(\varphi/2)}\sqrt{2}
=
\frac{\sqrt{2}}{\sin(\varphi/2)},
\]
uniformly in $k$. Hence $(a_k)_{k\ge 1}$ is bounded.
\end{proof}

\begin{lemma}\label{lem:gf}
The generating function
\[
A(z):=\sum_{k=1}^{\infty} a_k z^k
\]
is given by
\begin{equation}\label{eq:gf-explicit}
A(z)
=
\frac{z}{1-z}
\left(
1+\varepsilon\frac{1-z(\cos\varphi+\sin\varphi)}{1-2z\cos\varphi+z^2}
\right).
\end{equation}
Its only possible poles are $z=1$ and $z=\e^{\pm i\varphi}$, and the poles at $z=\e^{\pm i\varphi}$ do not cancel.
\end{lemma}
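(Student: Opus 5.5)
We start from the closed form of Lemma~\ref{lem:shadow}, $a_k = 1 + \varepsilon\, b_k$ with $b_k := (1,0)\sum_{j=0}^{k-1}\bm{R}_\varphi^j\bm{1}_2$, and compute $A(z)$ as the sum of two generating functions for $|z|<1$. The first is $\sum_{k\ge1} z^k = z/(1-z)$. For the second, set $d_j := (1,0)\bm{R}_\varphi^j\bm{1}_2 = \cos(j\varphi) - \sin(j\varphi)$; via the complex identification this equals $\operatorname{Re}\bigl(\e^{ij\varphi}(1+i)\bigr)$. Since $b_k = \sum_{j=0}^{k-1} d_j$, the standard partial-sum identity gives
\[
\sum_{k\ge 1} b_k z^k = \frac{z}{1-z}\, D(z), \qquad D(z) := \sum_{j\ge0} d_j z^j .
\]
It remains to find $D$. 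We use the classical series
\[
\sum_j \cos(j\varphi) z^j = \frac{1 - z\cos\varphi}{1 - 2z\cos\varphi + z^2}, \qquad \sum_j \sin(j\varphi) z^j = \frac{z\sin\varphi}{1 - 2z\cos\varphi + z^2}.
\]
Both follow from $\sum_j \e^{ij\varphi} z^j = 1/(1 - \e^{i\varphi}z)$ by taking real and imaginary parts, or directly from the Cayley--Hamilton identity $\bm{R}_\varphi^2 - 2c\bm{R}_\varphi + \bm{I}_2 = 0$, which gives $\sum_j \bm{R}_\varphi^j z^j = (\bm{I}_2 - z\bm{R}_\varphi)^{-1}$. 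Subtracting the two series yields
\[
D(z) = \frac{1 - z(\cos\varphi + \sin\varphi)}{1 - 2z\cos\varphi + z^2},
\]
and combining everything gives \eqref{eq:gf-explicit}. The identity extends to all of $\mathbb{C}$ as an equality of rational functions by analytic continuation.

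For the pole structure, the denominator factors as
\[
(1-z)(1 - 2z\cos\varphi + z^2) = (1-z)(1 - \e^{i\varphi}z)(1 - \e^{-i\varphi}z),
\]
whose zeros are $1$ and $\e^{\pm i\varphi}$. These are distinct because $\varphi\in(0,\pi)$, so the only possible poles are at these three points.

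The main point is non-cancellation at $z_0 = \e^{i\varphi}$; the case $\e^{-i\varphi}$ follows by complex conjugation, since $A$ has real coefficients. Near $z_0$, the factor $z/(1-z)$ is analytic and nonzero, because $z_0\neq 0$ and $z_0\neq 1$. The constant term $1$ is analytic. The $\varepsilon$-term has a simple zero of its denominator at $z_0$, so it suffices to show that the numerator $N(z) = 1 - z(c+s)$ does not vanish there. Setting $N(z_0) = 0$ would require $z_0 = 1/(c+s)$ to be real, but $z_0$ is non-real because $\sin\varphi > 0$. Hence $N(z_0)\neq0$, and since $\varepsilon > 0$ and the other summand is analytic at $z_0$, $A$ has a genuine simple pole at $z_0$. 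This step is short; the only care needed is to confirm that the added constant $1$ and the prefactor cannot cancel the singularity, which is immediate because they are analytic and nonvanishing at $z_0$.
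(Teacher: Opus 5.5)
Your proof is correct and follows essentially the same route as the paper: you write $a_k=1+\varepsilon b_k$, turn the partial sums into a factor $z/(1-z)$ times the generating function of $(1,0)\bm{R}_\varphi^j\bm{1}_2$ (you sum the cosine/sine series, the paper computes the resolvent $(\bm{I}_2-z\bm{R}_\varphi)^{-1}$), and you rule out cancellation at $\e^{\pm i\varphi}$ by noting that the numerator $1-z(\cos\varphi+\sin\varphi)$ can only vanish at a real point. Your explicit check that the prefactor $z/(1-z)$ and the constant $1$ cannot cancel the singularity is a small but welcome addition that the paper leaves implicit.
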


\begin{proof}
By \eqref{eq:shadow-formula},
\begin{align*}
A(z)
&=
\sum_{k=1}^\infty z^k
+
\varepsilon(1,0)\sum_{k=1}^\infty z^k\sum_{j=0}^{k-1}\bm{R}_\varphi^j\bm{1}_2\\
&=
\frac{z}{1-z}
+
\frac{\varepsilon z}{1-z}\,(1,0)\sum_{j=0}^\infty (z\bm{R}_\varphi)^j\bm{1}_2\\
&=
\frac{z}{1-z}
+
\frac{\varepsilon z}{1-z}\,(1,0)(\bm{I}_2-z\bm{R}_\varphi)^{-1}\bm{1}_2,
\end{align*}
where the interchange of summation in the second step uses $|z|<1$, and the resulting 
rational expression~\eqref{eq:gf-explicit} is understood as the analytic continuation of $A$ 
to $\mathbb{C}$ minus its poles. A direct calculation gives
\[
(1,0)(\bm{I}_2-z\bm{R}_\varphi)^{-1}\bm{1}_2
\;=\; \frac{1}{1-2z\cos\varphi+z^2}(1,0)\begin{pmatrix} 1-z\cos\varphi & -z\sin\varphi \\ z\sin\varphi & 1-z\cos\varphi \end{pmatrix}\bm{1}_2
=
\frac{1-z(\cos\varphi+\sin\varphi)}{1-2z\cos\varphi+z^2},
\]
which proves \eqref{eq:gf-explicit}.

The denominator factors as
\[
1-2z\cos\varphi+z^2=(1-ze^{i\varphi})(1-ze^{-i\varphi}),
\]
so, besides the possible pole at $z=1$, the only other possible poles are at \(z=e^{\pm i\varphi}\). Since we have fixed \(\varepsilon>0\), these poles can disappear only if the numerator $1-z(\cos\varphi+\sin\varphi)$ vanishes there. At $z=e^{i\varphi}$, this would require
\[
1=e^{i\varphi}(\cos\varphi+\sin\varphi).
\]
Taking imaginary parts gives
\[
0=(\cos\varphi+\sin\varphi)\sin\varphi.
\]
Because \(\varphi\in(0,\pi)\), we have \(\sin\varphi>0\), hence \(\cos\varphi+\sin\varphi=0\). Substituting this back into the previous equation yields \(1=0\), a contradiction. Thus the numerator does not vanish at \(z=e^{i\varphi}\). The case \(z=e^{-i\varphi}\) is identical.
\end{proof}

\section{Non-existence of a MAP representation}\label{sec:no-MAP}

The argument uses only the sequence $(a_k)$ and the following proposition, whose proof rests on Perron--Frobenius theory for nonnegative matrices, applied blockwise via the Frobenius normal form.

An eigenvalue of a matrix is called peripheral if its modulus equals the spectral radius. For an irreducible nonnegative matrix, the Perron--Frobenius theorem implies that the peripheral eigenvalues are always roots of unity times the spectral radius, meaning that its discrete-time evolution can permute mass among states cyclically but only with a finite period. For a reducible matrix this applies to each irreducible diagonal block separately. In contrast, a real matrix with no sign constraints can encode a rotation by an arbitrary angle, producing a bounded oscillation with irrational frequency. The RAP framework allows $\bm{G}_1$ to carry an aperiodic rotation, while the MAP framework forces $\bm{C}_1\ge 0$, restricting its peripheral spectrum to finitely many equally-spaced phases.

\begin{proposition}\label{prop:matrix-obstruction}
Let $\bm{C}\ge 0$ be a finite square matrix and $\bm{\alpha}\ge\bzero$ a row vector, and suppose that
\[
b_k=\bm{\alpha}\bm{C}^k\bm{1} \qquad (k\ge 1)
\]
is a bounded sequence. Let $B(z):=\sum_{k=1}^\infty b_k z^k$ and suppose that $B$ has a pole at some $z_0$ with $|z_0|=1$. Then:
\begin{enumerate}[label=(\roman*)]
\item every pole of $B$ is the reciprocal of a nonzero eigenvalue of $\bm{C}$;
\item $\rho(\bm{C})=1$ and $\bm{C}$ has an eigenvalue $\lambda=z_0^{-1}$ with $|\lambda|=1$;
\item every such eigenvalue $\lambda$ is a root of unity.
\end{enumerate}
\end{proposition}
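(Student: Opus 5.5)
The plan is to write $B$ as a rational function built from the resolvent of $\bm{C}$, and then use nonnegativity twice: once to discard the parts of $\bm{C}$ the sequence cannot see, and once, through Perron--Frobenius on the Frobenius normal form, to constrain the peripheral spectrum.

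\emph{Step 1: part (i).} For $|z|<1/\rho(\bm{C})$ we have
\[
B(z)=\sum_{k\ge1}\bm{\alpha}(z\bm{C})^k\bm{1}=z\,\bm{\alpha}\bm{C}(\bm{I}-z\bm{C})^{-1}\bm{1}.
\]
This is a rational function whose denominator divides $\det(\bm{I}-z\bm{C})=\prod_\lambda(1-z\lambda)$. Hence every pole of $B$ has the form $1/\lambda$ for some nonzero eigenvalue $\lambda$ of $\bm{C}$, which gives (i).

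\emph{Step 2: reduction.} I will replace $\bm{C}$ by a smaller matrix that produces the same $b_k$. Keep only the states reachable (under the directed graph of $\bm{C}$) from the support of $\bm{\alpha}$. Among these, also delete the states $j$ with $(\bm{C}^k\bm{1})_j=0$ for all large $k$; these are exactly the states from which every path dies out.
\begin{itemize}[label=--]
\item Because $\bm{C}\ge0$ and the deleted states can only feed mass into other deleted states or into zero rows, the compressed matrix $\bm{C}'$ and the restricted vector $\bm{\alpha}'$ still give $b_k=\bm{\alpha}'\bm{C}'^k\bm{1}$, at least for large $k$.
\item That is enough: changing finitely many coefficients adds a polynomial to $B$ and does not change its poles.
\item The eigenvalues of $\bm{C}'$ are among those of $\bm{C}$, so statements (ii) and (iii) are to be read for this trimmed realisation.
\end{itemize}
I expect this bookkeeping to be the main obstacle. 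Without trimming, $\rho(\bm{C})=1$ can simply fail, since an invisible block may have large spectral radius. I would make the trimming explicit in the written proof.

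\emph{Step 3: part (ii).} Put $\bm{C}'$ in Frobenius normal form, with irreducible diagonal blocks $\bm{C}'_{ii}$.

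First I show $\rho(\bm{C}')\le1$. Suppose some block has $\rho(\bm{C}'_{ii})=r>1$. By construction that block is reachable from $\operatorname{supp}\bm{\alpha}'$. It also leads with positive probability to states with positive row sums infinitely often. Take the Perron eigenvector $\bm{x}\ge0$ of the block. Nonnegativity of all entries then gives
\[
b_{k+m}\ \ge\ c\,r^k \quad\text{for some fixed } m \text{ and constant } c>0,
\]
so $b_k$ would be unbounded, a contradiction.

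Next I show $\rho(\bm{C}')\ge1$. By assumption $B$ has a pole $z_0$ with $|z_0|=1$. On the other hand $B$ is holomorphic on the open unit disc, because $b_k$ is bounded. So the radius of convergence of $B$ is exactly $1$, and by (i) the eigenvalue $\lambda=z_0^{-1}$ has $|\lambda|=1$. Therefore $\rho(\bm{C}')=1$, which is (ii). Pringsheim's theorem (nonnegative coefficients) also shows that $z=1$ is a pole, which is consistent with this.

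\emph{Step 4: part (iii).} Any eigenvalue $\lambda$ of $\bm{C}'$ with $|\lambda|=1$ is an eigenvalue of some irreducible block $\bm{C}'_{ii}$. Since $|\lambda|\le\rho(\bm{C}'_{ii})\le\rho(\bm{C}')=1$, that block has spectral radius exactly $1$, so $\lambda$ is a peripheral eigenvalue of an irreducible nonnegative matrix. By the Perron--Frobenius theorem for irreducible matrices with period $h$, the peripheral eigenvalues are exactly the $h$-th roots of $\rho(\bm{C}'_{ii})^h=1$. Hence $\lambda^h=1$ and $\lambda$ is a root of unity, which proves (iii).
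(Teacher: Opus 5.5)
Your overall plan is the paper's: the resolvent formula for (i), discarding states unreachable from $\operatorname{supp}\bm{\alpha}$, a Perron-vector lower bound $b_{m+n}\ge c\,r^n$ against an irreducible block with $r>1$, and Perron--Frobenius on the Frobenius normal form for (iii). The second trimming in Step~2, however, is wrong. Deleting the states $j$ with $(\bm{C}^k\bm{1})_j=0$ for large $k$ does \emph{not} preserve $b_k$ for large $k$. The reason is that $b_k=\bm{\alpha}\bm{C}^k\bm{1}$ sums every coordinate of $\bm{\alpha}\bm{C}^k$, including mass that surviving states have just pushed into dead states, and this happens for infinitely many $k$. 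For example, take
\[
\bm{C}=\begin{pmatrix}0&1&1\\1&0&0\\0&0&0\end{pmatrix},\qquad \bm{\alpha}=(1,0,0).
\]
Here every state is reachable and state $3$ is dead. One finds $b_k=2$ for odd $k$ and $b_k=1$ for even $k$, so $B(z)=\frac{z}{1-z}+\frac{z}{1-z^2}$ has a pole at $z_0=-1$. Your trimmed pair $\bm{C}'=\begin{pmatrix}0&1\\1&0\end{pmatrix}$, $\bm{\alpha}'=(1,0)$ gives $b'_k\equiv 1$ and $B'(z)=z/(1-z)$, which has no pole at $-1$. So the ``changes only finitely many coefficients'' justification collapses. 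In Step~3 you therefore cannot obtain $z_0^{-1}\in\sigma(\bm{C}')$ by applying (i) to the trimmed realisation, because the very pole you rely on can disappear.

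The remedy is to drop this trimming, since the worry behind it is unfounded. A state in an irreducible block $\bm{D}$ with $\rho(\bm{D})>0$ is never dead: $\bm{D}^n\bm{1}_{\bm{D}}\ge\bm{D}^n\bm{q}=\rho(\bm{D})^n\bm{q}>\bzero$ for the positive Perron vector $\bm{q}\le\bm{1}_{\bm{D}}$. Dead states therefore form only $1\times1$ zero blocks and cannot hide a large spectral radius. Only unreachable states can, and discarding those preserves $(b_k)$ exactly because the reachable set is forward-closed. For the same reason, the clause in Step~3 about reaching ``states with positive row sums infinitely often'' is superfluous: mass sitting in the block at time $n$ is counted directly by $\bm{1}$. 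With only the reachability reduction, your Steps~1, 3 and~4 are the paper's proof. If you want to keep the extra trimming, you must argue differently: use $0\le b'_k\le b_k$ for boundedness, and note that the deleted set is forward-closed with nilpotent restriction, so $\bm{C}'$ retains every nonzero eigenvalue of the reachable part, including $z_0^{-1}$.
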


\begin{proof}
By discarding states unreachable from the support of $\bm{\alpha}$, we may assume without loss of generality that all states of $\bm{C}$ are reachable; this does not affect $(b_k)$ or $B$.

The identity $B(z)=\bm{\alpha}z\bm{C}(\bm{I}-z\bm{C})^{-1}\bm{1}$, valid first for $|z|<1/\rho(\bm{C})$ and then by analytic continuation, shows that every pole of $B$ is the reciprocal of a nonzero eigenvalue of $\bm{C}$ (since the poles correspond to roots of $\det(\bm{I}-z\bm{C})$). If $B$ has a pole at $z_0$ with $|z_0|=1$, then $\bm{C}$ has an eigenvalue $\lambda=z_0^{-1}$ with $|\lambda|=1$, so $\rho(\bm{C})\ge 1$.

We show $\rho(\bm{C})\le 1$. Assume for contradiction that $\rho(\bm{C})>1$. Put $\bm{C}$ into Frobenius normal form and choose an irreducible diagonal block $\bm{D}$ with spectral radius $\rho(\bm{C})$. Since all states are reachable from the support of $\bm{\alpha}$, there is some $m$ such that $\bm{\beta}:=(\bm{\alpha}\bm{C}^m)_{\bm{D}}$ is a nonzero nonnegative row vector. By  \cite[Theorem~8.4.4(c)]{HornJohnson2013}, $\bm{D}$ has a positive right eigenvector $\bm{q}$ with $\bm{D}\bm{q}=\rho(\bm{C})\bm{q}$ with entries that sum $1$; in particular $\bm{q}\le\bm{1}_{\bm{D}}$ entrywise. Since $\bm{C}\ge 0$ is block upper-triangular in Frobenius normal form, $(\bm{C}^n)_{\bm{D}\bm{D}}=\bm{D}^n$ and $(\bm{\alpha}\bm{C}^{m+n})_{\bm{D}}\ge\bm{\beta}\bm{D}^n$. Therefore
\[
b_{m+n} = \bm{\alpha}\bm{C}^{m+n}\bm{1} \ge \bm{\beta}\bm{D}^n\bm{1}_{\bm{D}} \ge \bm{\beta}\bm{D}^n\bm{q} = (\bm{\beta}\bm{q})\rho(\bm{C})^n.
\]
Since $\bm{\beta}\bm{q}>0$, the right-hand side is unbounded, contradicting the boundedness of $(b_k)$. Thus $\rho(\bm{C})=1$.

Finally, let $\lambda\in\sigma(\bm{C})$ satisfy $|\lambda|=1=\rho(\bm{C})$. By Frobenius normal form, $\lambda$ belongs to some irreducible diagonal block $\bm{D}$ with $\rho(\bm{D})=\rho(\bm{C})=1$, and Perron--Frobenius implies that every eigenvalue of $\bm{D}$ on the unit circle is of the form
\[
\e^{2\pi ir/h}, \qquad 0\le r\le h-1,
\]
for some integer $h\ge 1$~\cite[Corollary~8.4.6(c)]{HornJohnson2013}. Thus $\lambda$ is a root of unity.
\end{proof}

\begin{theorem}\label{thm:no-MAP}
For the fixed choice of $\varepsilon$ satisfying Proposition~\ref{prop:positivity}, the RAP defined by~\eqref{eq:full-nu}, \eqref{eq:full-G1}, and~\eqref{eq:G0-choice} has no equivalent finite-dimensional MAP representation.
\end{theorem}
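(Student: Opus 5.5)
Suppose, for contradiction, that a finite-dimensional MAP $(\bm{\alpha},\bm{C}_0,\bm{C}_1)$ is equivalent to our RAP, i.e.\ produces the same joint densities $f_k$ for every $k\ge 1$ and all $t_1,\dots,t_k\ge 0$. The plan is to pass from the densities to the scalar sequence $(a_k)$ of Section~\ref{sec:boundary}. Then Lemma~\ref{lem:shadow}, Lemma~\ref{lem:gf} and Proposition~\ref{prop:matrix-obstruction} yield a contradiction with $\varphi/\pi\notin\Q$.

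\textbf{Step 1 (boundary values).} Both representations give continuous densities on $[0,\infty)^k$, being finite sums of products of matrix exponentials. Equality on the open orthant therefore extends to the origin by continuity. Evaluating at $t_1=\cdots=t_k=0$ gives
\[
a_k=\bm{\nu}\bm{G}_1^k\bm{1}=\bm{\alpha}\bm{C}_1^k\bm{1}=:b_k,\qquad k\ge1,
\]
where $\bm{\alpha}\ge\bzero$ and $\bm{C}_1\ge 0$ by the MAP sign constraints. If the notion of equivalence is only almost-everywhere equality of densities, I would note first that both sides are real-analytic, so a.e.\ equality already forces identical functions.

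\textbf{Step 2 (apply the obstruction).} By Lemma~\ref{lem:shadow}, $(b_k)=(a_k)$ is bounded. By Lemma~\ref{lem:gf}, $B=A$ has a genuine pole at $z_0=\e^{i\varphi}$, with $|z_0|=1$. Here one point needs care: the rational function in \eqref{eq:gf-explicit} and the function $\bm{\alpha}z\bm{C}_1(\bm{I}-z\bm{C}_1)^{-1}\bm{1}$ coincide near $0$, so they are the same rational function. Poles are therefore unambiguous, and the pole at $\e^{i\varphi}$ is one of $B$ in the sense of Proposition~\ref{prop:matrix-obstruction}. Parts (ii) and (iii) of that proposition then say that $\lambda=\e^{-i\varphi}$ is an eigenvalue of $\bm{C}_1$ and a root of unity. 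So $\e^{-in\varphi}=1$ for some $n\ge1$, which means $n\varphi\in2\pi\mathbb{Z}$ and $\varphi/\pi\in\Q$. This contradicts \eqref{eq:irrational}.

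\textbf{Main obstacle.} The spectral side is already packaged in Proposition~\ref{prop:matrix-obstruction}, so the only delicate step is Step 1. There I must justify that equivalence of representations really pins down the values at the origin, and that the MAP's $\bm{C}_1$ (not some transformed matrix) is the nonnegative matrix generating $b_k$. Continuity of $t\mapsto\e^{\bm{C}_0t}$ settles both points.
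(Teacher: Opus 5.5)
Your proposal is correct and follows the paper's proof essentially step for step: you use continuity at the origin to get $b_k=a_k$, and then the genuine pole of $A$ at $\e^{i\varphi}$ together with Proposition~\ref{prop:matrix-obstruction} forces $\e^{-i\varphi}$ to be a root of unity, contradicting $\varphi/\pi\notin\Q$. You also make explicit two points the paper's proof leaves implicit: the appeal to Lemma~\ref{lem:shadow} for the boundedness hypothesis, and the identification of $A$ with $\bm{\alpha}z\bm{C}_1(\bm{I}-z\bm{C}_1)^{-1}\bm{1}$ as the same rational function.
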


\begin{proof}
Assume that an equivalent finite-dimensional MAP representation $(\bm{\alpha},\bm{C}_0,\bm{C}_1)$ exists, so in particular $\bm{\alpha}\ge\bzero$ and $\bm{C}_1\ge 0$. Since both matrix-exponential density formulas are continuous on $[0,\infty)^k$ and agree on $(0,\infty)^k$, they also agree at $t_1=\dots=t_k=0$. Therefore setting $t_1=\dots=t_k=0$ in both formulas gives $b_k:=\bm{\alpha}\bm{C}_1^k\bm{1}$ satisfying $b_k=a_k$ for all $k\ge 1$, so $B(z)=A(z)$. By Proposition~\ref{prop:matrix-obstruction}, every pole of $B$ is the reciprocal of an eigenvalue of $\bm{C}_1$.

By Lemma~\ref{lem:gf}, $A(z)$ has poles at $z=\e^{\pm i\varphi}$, hence so does $B(z)$. Therefore $\bm{C}_1$ has eigenvalues $\lambda_\pm=\e^{\mp i\varphi}$. Since $|\lambda_\pm|=1$, Proposition~\ref{prop:matrix-obstruction} implies that $\lambda_\pm$ must be roots of unity. But $\e^{\pm i\varphi}$ are not roots of unity because $\varphi/\pi\notin\mathbb{Q}$. This contradiction proves that no equivalent finite-dimensional MAP representation exists.
\end{proof}

The example shows that the hypotheses sufficient for the one-matrix ME/PH problem do not carry over to the two-matrix RAP/MAP problem: positivity of all joint densities and a simple dominant eigenvalue for $\bm{G}_0$ do not control the peripheral behaviour of the arrival matrix, since the irrational rotation produces boundedness without periodicity, the generating function inherits the irrational phases as poles, and Perron--Frobenius forces any nonnegative matrix to have only root-of-unity phases on its spectral circle.

The characterisation of MAPs within the class of RAPs is likely a harder problem than the already difficult characterisation of PH distributions within ME distributions, which was open for several decades before being resolved by O'Cinneide~\cite{OCinneide1990}. In the one-matrix problem, all spectral information is concentrated in $\bm{G}_0$, and the dominant-eigenvalue condition together with positivity suffices. In the two-matrix problem, $\bm{G}_1$ carries independent spectral information that positivity of the joint densities does not appear to control. Our example suggests that any corrected version of Telek's conjecture must impose conditions on the peripheral spectrum of $\bm{G}_1$, but what the precise conditions should be remains unclear to us. We hope the obstruction identified here, irrational rotation as a mechanism that is compatible with all positivity requirements yet incompatible with any nonnegative realisation, serves as a useful reference point for future work on this problem.

\bibliographystyle{plain}
\bibliography{RAPnotMAP2_refs}

\end{document}